\theoremstyle{plain}
\newtheorem*{thm:ADinBA}{Theorem \ref{thm:ADinBA}}
\newtheorem*{thm:main}{Theorem \ref{thm:main}}
\newtheorem*{thm:symmetries}{Theorem \ref{thm:symmetries}}
\newtheorem{Thm}{Theorem}[section]
\newtheorem{Lem}[Thm]{Lemma}
\newtheorem{Prop}[Thm]{Proposition}
\theoremstyle{definition}
\newtheorem*{Ques}{Question}
\newtheorem{con}{Conjecture}
\theoremstyle{remark}
\newtheorem{Rem}{Remark}[section]
\newtheorem{Ex}{Example}[section]
\definecolor{violet}{RGB}{204,52,255}
\numberwithin{equation}{section}
\begin{document}

\newcommand{\rx}{\mathcal{R}(X)}

% Title
\title{Perfect powers in Catalan and Narayana numbers}

\author{Sara Checcoli$^{*}$} \thanks{$^{*}$ supported by an SFSN grant}
\address{Universit\"at Basel \\Mathematisches Institut \\ Rheinsprung 21\\
CH-4051 Basel, Switzerland}\email{sara.checcoli@gmail.com}
\author{Michele
D'Adderio}
\address{Universit\'e Libre de Bruxelles (ULB)\\D\'epartement de Math\'ematique\\
Boulevard du Triomphe, B-1050 Bruxelles\\
Belgium}\email{mdadderi@ulb.ac.be}

\begin{abstract}
When a Catalan number or a Narayana number is a (non-trivial) perfect power?
For Catalan numbers, we show that the answer is ``never''. However, we prove that for every $b$,
the Narayana number $N(a,b)$ is a (non-trivial) perfect square for infinitely many
values of $a$, and we show how to compute all of them. We also
conjecture that $N(a,b)$ is never a (non-trivial) perfect $k$-th
power for $k\geq 3$ and we prove some cases of this conjecture.
\end{abstract}

\maketitle
%\tableofcontents

\section*{Introduction}

Given two natural numbers $a$ and $b$, the \emph{Narayana number}
$N(a,b)$ is defined by the formula
$$
N(a,b):=\frac{1}{a}\left(\!\!\!%
\begin{array}{c}
  a \\
  b \\
\end{array}%
\!\!\!\right)\left(\!\!\!%
\begin{array}{c}
  a \\
  b-1 \\
\end{array}%
\!\!\!\right).
$$
These numbers are well known in discrete mathematics, since they
count several families of mathematical objects (see
\cite{delestviennot} for a classical reference, or
\cite{aigner,fominreading,hivertnovellithibon,huq,mansoursun,mathews,sulanke,williams,yanoyoshida}
for some more recent occurrence), e.g. $N(m+n-1,m)$ is the number
of parallelogram polyominoes in a rectangular $m\times n$ box (cf.
\cite{avaldadderiodukesleborgne}).

There is a natural link with the famous \emph{Catalan numbers}
$$
C_n:=\frac{1}{n+1}\left(\!\!\!\begin{array}{c}
  2n \\
  n \\
\end{array}\!\!\!\right),
$$
given by the identity $\sum_kN(n,k)=C_n$.

The Catalan numbers are ubiquitous in mathematics: see
\cite[Exercise 6.19]{stanleybook,stanleyaddendum} for about 200
families of mathematical objects counted by these numbers.

In algebraic combinatorics $q,t$-analogues of Catalan numbers have
been studied in connection with the so called $n!$-conjecture (now
$n!$-theorem of Haiman) about the renown diagonal harmonics (see
\cite{GHCat,haglundbook,haiman}). More recently, a $q,t$-analogue
of the Narayana numbers has been shown to be intimately related to
the decade old shuffle conjecture about the Frobenius
characteristic of the diagonal harmonics (see
\cite{avalbergerongarsia,addhl,haglund}), renewing the interest
for these numbers.

From a number theoretic point of view, it is natural to ask about
divisibility properties of these numbers, which are of course
related to divisibility properties of the binomial coefficients.

For the Catalan numbers, such properties have been studied by
several authors. In particular their parity was studied in
\cite{AC}, while more generally their congruence modulo a power of
$2$ has been recently investigated in \cite{liuyeh,xinxu}. Their
divisibility by prime powers was completely determined in
\cite{AltKub} via arithmetic techniques. In \cite{DS}, among other
results, the $2$-adic valuation of Catalan numbers has been
studied by means of certain group actions. Similar results for
generalizations of Catalan numbers have been studied in
\cite{konvalinka,postnikovsagan}.

Lately, also the Narayana numbers have received more attention in
this direction. In particular, in \cite{bonasagan}, using a
theorem of Kummer on the $p$-adic valuation of binomial
coefficients,  the authors study the divisibility of $N(a,b)$ by
primes, in relation with the description of $N(a,b)$ in base
$p$.\newline

In this work we study the following number theoretic question:
\begin{Ques}
When a Catalan number or a Narayana number is a (non-trivial) perfect power?
\end{Ques}
For us an integer is a \emph{(non-trivial) perfect power} if it is
of the form $m^k$ where $m$ and $k$ are both integers $\geq 2$ (so
$1$ is \emph{not} a perfect power). For $k=2$, we call this
integer a \emph{perfect square}.
\smallskip

The question for Catalan numbers has a negative answer: it
follows easily from a classical theorem of Ramanujan on the
distribution of primes in intervals that the sequence of Catalan
numbers does not contain perfect powers. We show this in Section
\ref{cat}.

Interestingly enough, the situation for the Narayana numbers is
very different: there are a lot of them which are perfect squares.

We study this case in Section \ref{sq}. We start by exhibiting
infinitely many pairs $(a,b)$ such that $N(a,b)$ is a perfect
square, see Proposition \ref{quad}. Then, in Theorem \ref{main}, we
give an effective algorithm to compute all such pairs, proving in
particular the stronger result that for any given $b>1$, there are
infinitely many $a>b$ such that $N(a,b)$ is a perfect square. It
turns out that this problem can be reduced to the study of certain
generalized Pell's equations: we recall the facts about these
equations that we need in Section \ref{gPell}.

In Section \ref{computations} we show how
the algorithm works with an explicit example.

We conclude, in Section \ref{hp}, by studying the seemingly more
complicated case of higher powers. We make the following
conjecture.
\begin{con}\label{conNara} $N(a,b)$ is never a non-trivial perfect $k$-th power for $k\geq 3$.
\end{con}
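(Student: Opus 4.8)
The plan is to reduce the problem to controlling the $p$-adic valuations $v_p(N(a,b))$ for suitable primes $p$, since a non-trivial perfect $k$-th power must have every prime appearing to an exponent divisible by $k$. Writing $v_p(N(a,b)) = v_p\binom{a}{b} + v_p\binom{a}{b-1} - v_p(a)$ and using Kummer's theorem (the exponent of $p$ in $\binom{a}{c}$ counts the carries when adding $c$ and $a-c$ in base $p$), the key observation is that to exclude exponents $k \ge 3$ it suffices to exhibit a single prime $p$ with $v_p(N(a,b)) \in \{1,2\}$, because neither $1$ nor $2$ is divisible by any $k \ge 3$. This is a genuinely weaker requirement than the one needed to exclude squares, and it is precisely the room that makes the higher-power case approachable while the square case is not.

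The cleanest complete case is $a$ prime. For $a = p$ and $2 \le b \le p-1$ one has $v_p\binom{p}{b} = v_p\binom{p}{b-1} = 1$, so $v_p(N(p,b)) = 2 - 1 = 1$; hence $N(p,b)$ is not a perfect power for any $k \ge 2$ (the cases $b \in \{1,p\}$ giving the trivial value $1$). More generally, I would analyse primes $p$ in the range $a/2 < p < a$: writing $a = p + r$ with $0 < r < p$, the carry count shows $v_p(N(a,b)) = 0$ for $p \le a-b$, $v_p(N(a,b)) = 1$ for $p = a-b+1$, and $v_p(N(a,b)) = 2$ for $a-b+2 \le p \le a-1$. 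Thus whenever the interval $[a-b+2,\,a-1]$ contains a prime, Conjecture \ref{conNara} holds for that pair $(a,b)$; invoking Bertrand's postulate and the sharper prime-distribution estimates of Ramanujan (the same tool used for the Catalan case) lets me cover all pairs with $b$ large relative to the prime gaps near $a$, and the symmetry $N(a,b) = N(a,a+1-b)$ doubles the reach.

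The remaining obstruction is exactly the small-$b$ regime, where the interval $[a-b+2,\,a-1]$ is too short to force a prime. Here I would treat $b = 2$ (and, by symmetry, $b = a-1$) separately: since $N(a,2) = \binom{a}{2} = \tfrac{a(a-1)}{2}$ and $\gcd(a,a-1)=1$, the equation $N(a,2) = y^k$ reduces, according to the parity of $a$, to $2\alpha^k - \beta^k = \pm 1$. For $k = 2$ this is a Pell equation and accounts for the infinitely many squares of Section \ref{sq}; for each fixed $k \ge 3$ it is a Thue equation, which has only finitely many solutions, and I would verify by the standard effective (Baker-type) bounds that the only ones are trivial. The main difficulty --- and the reason the statement remains a conjecture rather than a theorem --- is the lack of uniformity: for each fixed small $b$ one is left with a family of Thue equations indexed by $k$, and no single argument is known that handles all exponents $k \ge 3$ simultaneously, nor that forces a prime of valuation $1$ or $2$ for every pair $(a,b)$.
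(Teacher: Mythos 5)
The statement you were asked to prove is Conjecture \ref{conNara}, which the paper itself does \emph{not} prove: it is left open, and the paper only establishes partial results in its direction (Proposition \ref{lem:a=p}, Theorems \ref{thm1main} and \ref{thm2main}, and the case $b=2$ via \cite{gyory}). Your proposal correctly recognizes this and does not claim a complete proof, so the honest comparison is between your sketched partial results and the paper's.

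What you do prove essentially coincides with the paper's first theorem. Your Kummer-based valuation computation for a prime $a/2<p<a$, namely $v_p(N(a,b))=0$ for $p\le a-b$, $v_p(N(a,b))=1$ for $p=a-b+1$, and $v_p(N(a,b))=2$ for $a-b+2\le p\le a-1$, is correct and is exactly the content of Theorem \ref{thm1main} (and your $a=p$ case is part (1) of Proposition \ref{lem:a=p}); the paper obtains the same conclusion from the identity \eqref{eq1} and Tchebyshev's theorem rather than from Kummer's theorem, but the mechanism --- forcing a prime whose exponent in $N(a,b)$ is $1$ or $2$ --- is identical. The genuine differences are three. First, for $b=2$ you reduce $N(a,2)=m^k$ to the equations $2\alpha^k-\beta^k=\pm 1$ and invoke Thue/Baker finiteness; this gives finiteness only for each \emph{fixed} $k\ge 3$ and, as you concede, no uniformity in $k$, whereas the paper's citation of Gy\H{o}ry \cite{gyory} settles $b=2$ completely, i.e. $N(a,2)=m^k$ forces $k\le 2$ for all $k$ at once (to make your route uniform you would need a result in the spirit of Bennett's theorem that $|ax^n-by^n|=1$ has at most one positive solution, not mere Baker-type finiteness). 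Second, you miss the paper's complementary result, Theorem \ref{thm2main}: combining the bound $P\bigl(\binom{a}{b}\bigr)>1.95\,b$ of \cite{shoreytijdeman} with Mignotte's theorem \cite{mignotte} yields $k\le 2$ whenever $a/2\ge b\ge \sqrt{a}/1.95$, a range that is not controlled by prime gaps near $a$ and which, for small $a$, can beat the Theorem \ref{thm1main} bound. Third, you do not record the reduction of $b=3$ to a Pillai-type equation $\left(2(a-1)^2-1\right)^2-48m^k=1$, which is the paper's explanation of why the small-$b$ regime is genuinely hard. In short: nothing you assert is wrong, and your core argument is the paper's Theorem \ref{thm1main} rediscovered, but neither you nor the paper proves the conjecture --- it remains open.
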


Conjecture \ref{conNara} seems to be quite hard. For example, for
$b=3$ (the case $b=2$ follows from some known results), it is
related to a generalization of the Catalan's conjecture by Pillai.
However we are able to provide some evidence for our conjecture,
by showing that it holds when $b$ is ``not too small'' (see
Theorems \ref{thm1main} and \ref{thm2main}).

\section{Catalan numbers are not perfect powers}\label{cat}

The famous \emph{Catalan numbers} are defined, for $n\geq 1$, by
the formula
$$
C_n:=\frac{1}{n+1}\left(\!\!\!\begin{array}{c}
  2n \\
  n \\
\end{array}\!\!\!\right).
$$

In this section we answer the question:
\begin{Ques}
Are there perfect powers in the Catalan sequence $\{C_n\}_{n\geq
1}$?
\end{Ques}
Though Catalan numbers have been extensively studied, to the best
of our knowledge this is the first investigation of this kind.

The negative answer to our question follows easily from the
following classical theorem, which is due to Ramanujan (see
\cite[Section 9.3B]{shap} for a proof).
\begin{Thm}[Ramanujan]
For $n\geq 6$ there are at least two primes between $n$ and $2n$.
\end{Thm}

Here is a complete answer to our question.
\begin{Thm}
For all $n$, the $n$-th Catalan number $C_n$ is never a perfect
power.
\end{Thm}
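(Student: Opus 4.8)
The plan is to exhibit, for each $n$, a prime $p$ that divides $C_n$ to exactly the first power. Since every non-trivial perfect power $m^k$ has each prime in its factorization appearing with exponent divisible by $k\geq 2$, the existence of a single prime $p$ with $v_p(C_n)=1$ immediately shows that $C_n$ is not a perfect power. This reduces the whole problem to a valuation computation.

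First I would compute the $p$-adic valuation of $C_n=(2n)!/((n+1)!\,n!)$ for primes $p$ lying in the interval $(n,2n)$. For such $p$ we have $n<p\leq 2n$, so $p$ divides $(2n)!$ exactly once (the next multiple $2p$ already exceeds $2n$) and does not divide $n!$ at all; hence $v_p\binom{2n}{n}=v_p((2n)!)-2v_p(n!)=1$. Writing $v_p(C_n)=v_p\binom{2n}{n}-v_p(n+1)$, it remains only to control $v_p(n+1)$. Since $p>n$, the prime $p$ can divide $n+1$ only when $p=n+1$, in which case $v_p(n+1)=1$ and the two contributions cancel; for every prime $p$ with $n+1<p<2n$ we instead obtain $v_p(C_n)=1$, as desired.

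Next I would invoke Ramanujan's theorem to produce such a prime whenever $n\geq 6$. The theorem guarantees at least two primes strictly between $n$ and $2n$; at most one of them can equal $n+1$, so at least one lies in the open interval $(n+1,2n)$, and for that prime $v_p(C_n)=1$. This is exactly the point where the two-prime bound of Ramanujan is needed rather than the single prime supplied by Bertrand's postulate: a lone prime in $(n,2n)$ might coincide with the exceptional value $n+1$, where the valuation drops to $0$, so one genuinely needs a second prime to guarantee one strictly above $n+1$.

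Finally I would dispose of the finitely many remaining cases $1\leq n\leq 5$ by direct inspection: $C_1=1$, $C_2=2$, $C_3=5$, $C_4=14=2\cdot 7$, and $C_5=42=2\cdot 3\cdot 7$, none of which is a non-trivial perfect power (recall $1$ is excluded by definition). The main conceptual step is the valuation computation combined with the observation that the exceptional prime $n+1$ forces the use of the two-prime bound; the small-case check and the reduction to a single prime of valuation one are routine.
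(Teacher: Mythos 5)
Your proof is correct and follows essentially the same route as the paper: both arguments use Ramanujan's two-prime theorem to produce a prime strictly larger than $n+1$ and smaller than $2n$, show that this prime divides $C_n$ exactly once (you via the valuation identity $v_p(C_n)=v_p\binom{2n}{n}-v_p(n+1)$, the paper via the factorization $C_n=\frac{(2n)(2n-1)\cdots(n+2)}{n!}$), and finish by inspecting $C_1,\dots,C_5$. Your explicit remark on why Bertrand's postulate alone would not suffice---the lone prime could be the exceptional value $n+1$---is exactly the point implicit in the paper's phrase ``they cannot be both $n$ and $n+1$,'' so the two proofs coincide in substance.
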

\begin{proof}
We write \begin{equation}\label{eq:catalan}
C_n=\frac{1}{n+1}\left(\!\!\!\begin{array}{c}
  2n \\
  n \\
\end{array}\!\!\!\right)=\frac{(2n)(2n-1)\cdots(n+2)}{n!}.
\end{equation}
By Ramanujan's Theorem, for $n\geq 6$ there are at least two
primes between $n$ and $2n$. Since they cannot be both $n$ and
$n+1$, this implies that there is at least one prime between $n+2$
and $2n$. Hence this prime divides exactly $C_n$ (since it divides
the numerator in \eqref{eq:catalan}, but not the denominator),
showing that it cannot be a perfect power.

Since $C_1=1, C_2=2, C_3=5, C_4=14$ and $C_5=42$ are not perfect
powers, this completes the proof.
\end{proof}

\section{The generalized Pell's equation $n^2-dm^2=z^2$} \label{gPell}

Before considering the problem of when a Narayana number is a
non-trivial perfect power, we recall some results on Pell's
equation which will be used in the following section.\newline

Let $z$ and $d$ be positive integers, with $d$ squarefree. In this
section we want to describe all the positive integral solutions
$(n,m)$ \textbf{with $m$ even} of the \emph{generalized Pell's
equation}
\begin{equation}\label{pell1}
n^2-dm^2=z^2.
\end{equation}

We start recalling the following classical well known results. We
suggest \cite[Chapter 1]{Zan} as general reference.

\subsection{Solving the generalized Pell's equations
$n^2-dm^2=z^2$}

From \cite[Proposition 1.5]{Zan}, all positive integral solutions
$(n,m)$ of the \emph{generalized Pell's equation}
\begin{equation}\label{pell1}
n^2-dm^2=z^2
\end{equation}
are obtained as
\[n\pm m\sqrt{d}=(n'+m'\sqrt{d})(n_1+m_1\sqrt{d})^k\] where $k\in \mathbb{Z}$,
$(n_1,m_1)$ is the \emph{fundamental solution} of the
corresponding \emph{Pell's equation}
\begin{equation}\label{pell2}
n^2-dm^2=1,
\end{equation}
and $(n',m')$ is a particular positive solution of \eqref{pell1},
belonging to a finite set effectively computable only in terms of
$n_1,m_1,d$ and $z$. In particular $(n',m')$ can be chosen so that
\[|n'|<z\sqrt{n_1+m_1\sqrt{d}} \phantom{rrrr} \text{and}
\phantom{rrrr}  |m'|<z\sqrt{\frac{n_1+m_1\sqrt{d}}{d}}.\]

The fundamental solution $(n_1,m_1)$ of \eqref{pell2} is easily
computable: $n_1/m_1$ is in fact the truncation of the continued
fraction expansion of $\sqrt{d}$ to the end of its first period,
if this period has even length, or to the end of its second
period, if this period has odd length. Moreover, all positive
solutions of \eqref{pell2} are of the form $(n_k,m_k)$ with
$$
n_k+m_k\sqrt{d}:=(n_1+m_1\sqrt{d})^k \quad \text{ for }k\in
\mathbb{N}.
$$
\subsection{Finding integral solutions $(n,m)$ with $m$ even}
We go back to the original purpose of this section, that is to find integral solutions of \eqref{pell1} with $m$ even.

Observe that if $d$ is even, then $n$ and $z$ must have the same
parity. We have
$$
dm^2=n^2-z^2=(n-z)(n+z),
$$
hence in this case $dm^2$ is divisible by $2^2=4$; since $d$ is
squarefree, we must have that $2$ divides $m^2$, so $m$ is even.

\smallskip

Therefore, we assume from now on that $d$ is odd.

\smallskip

Consider the general product
$$
x_2+y_2\sqrt{d}:=(x_1+y_1\sqrt{d})(x_0+y_0\sqrt{d}),
$$
where all $x_i$'s and $y_i$'s are integers. We have
$$
(x_1+y_1\sqrt{d})(x_0+y_0\sqrt{d})=(x_1x_0+y_1y_0 d)+\sqrt{d}
(x_1y_0+x_0y_1),
$$
hence
$$
x_2=x_1x_0+y_1y_0 d\quad \text{ and } \quad y_2=x_1y_0+x_0y_1.
$$
It is now clear that:
\begin{itemize}
\item if $x_0$ and $x_1$ are both even, and $y_0$ and $y_1$ are
both odd, then $x_2$ is odd while $y_2$ is even; \item similarly,
if $y_0$ and $y_1$ are both even, and $x_0$ and $x_1$ are both
odd, then $x_2$ is odd while $y_2$ is even.
\end{itemize}
On the other hand
\begin{itemize}
\item if $x_0$ and $y_1$ are both even, and $y_0$ and $x_1$ are
both odd, then $x_2$ is even while $y_2$ is odd.
\end{itemize}

Moreover
\begin{itemize}
\item if only one of $x_0,y_0,x_1,y_1$ is odd, then both
$x_2$ and $y_2$ are even; \item if only one of $x_0,y_0,x_1,y_1$
is even, then both $x_2$ and $y_2$ are odd.
\end{itemize}
\smallskip

Now, since $(n_k, m_k)$ are all solutions of the Pell's equation
\eqref{pell2}, necessarily $n_k$ and $m_k$ have different parities
for all $k$ (we are assuming that $d$ is odd!).

In fact, from what we observed, we easily deduce that if $n_1$ is
odd, then $n_k$ is odd (and hence $m_k$ is even) for all $k\in
\mathbb{Z}$. Similarly, if $n_1$ is even, then $n_{2k+1}$ is even
(and hence $m_{2k+1}$ is odd) for all $k\in \mathbb{Z}$, while
$n_{2k}$ is odd (and hence $m_{2k}$ is even) for all $k\in
\mathbb{Z}$.

\smallskip

Now all positive solutions $(n,m)$ of \eqref{pell1} are obtained
as
$$
n\pm m\sqrt{d}=(n'+m'\sqrt{d})(n_1+m_1\sqrt{d})^k=
(n'+m'\sqrt{d})(n_k+m_k\sqrt{d})\quad \text{ for }k\in \mathbb{Z},
$$
from which $m=|n'm_k+m'n_k|$. So the parity of $m$ depends on the
parities of $n'$ and $m'$. From the discussion above, we easily
deduce the following lemma.
\begin{Lem} \label{lemsol}
In the notation above, all positive solutions $(n,m)$ of
\eqref{pell1} with $m$ even are obtained as
$$
n\pm m\sqrt{d}=(n'+m'\sqrt{d})(n_k+m_k\sqrt{d})\quad \text{ for
}k\in \mathbb{Z},
$$ if $d$ is even or if both $n'$ and $m'$ are even;
$$
n\pm m\sqrt{d}= (n'+m'\sqrt{d})(n_{2k}+m_{2k}\sqrt{d})\quad \text{
for }k\in \mathbb{Z},
$$
if both $n'$ and $d$ are odd, while $m'$ is even;
$$
n\pm m\sqrt{d}= (n'+m'\sqrt{d})(n_{2k+1}+m_{2k+1}\sqrt{d})\quad
\text{ for }k\in \mathbb{Z},
$$
if both $m'$ and $d$ are odd, while $n'$ is even.
\end{Lem}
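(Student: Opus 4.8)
The plan is to prove Lemma \ref{lemsol} by tracking the parities of $n$ and $m$ through the recursion, using the parity computations already established in the text. We know that every positive solution of \eqref{pell1} with $m$ even arises as
$$
n\pm m\sqrt{d}=(n'+m'\sqrt{d})(n_k+m_k\sqrt{d}),\qquad k\in\mathbb{Z},
$$
so that $m=|n'm_k+m'n_k|$, and the entire task is to decide, for each choice of parities of $n'$ and $m'$, exactly which values of $k$ yield $m$ even. The case where $d$ is even is immediate from the opening observation of the subsection: when $d$ is even, $m$ is forced to be even for \emph{every} solution, so no restriction on $k$ is needed, and the first formula covers all positive solutions. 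This disposes of the first clause in a single line.

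Next I would treat $d$ odd, where the key input is the established dichotomy: if $n_1$ is odd then $n_k$ is odd and $m_k$ is even for all $k$, whereas if $n_1$ is even then $n_{2k}$ is odd with $m_{2k}$ even, and $n_{2k+1}$ is even with $m_{2k+1}$ odd. The cleanest way to organize the argument is to split on the parity of $n_1$. If $n_1$ is odd, then every $(n_k,m_k)$ has $n_k$ odd and $m_k$ even, so $m=|n'm_k+m'n_k|\equiv m'n_k\equiv m'\pmod 2$; thus $m$ is even precisely when $m'$ is even, and in that case \emph{all} $k$ work. If $n_1$ is even, I would substitute the two parity patterns for even and odd $k$ separately into $m=|n'm_k+m'n_k|$ and read off when the result is even.

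The substantive content is then matching these parity computations to the three stated clauses. First note that for $d$ odd, the product $(n'+m'\sqrt d)(n_k+m_k\sqrt d)$ must itself satisfy $n^2-dm^2=z^2$, so $n$ and $m$ cannot both be even and cannot both be odd when $z$ is... more carefully, since $(n',m')$ and $(n_k,m_k)$ each have components of opposite parity (the former because it is a solution of \eqref{pell1} with $d,z$ constraining parities, the latter as just recalled), exactly one of $n',m'$ is even. If both $n'$ and $m'$ are even this can only happen in the $d$-even case already handled, so for $d$ odd the clause ``both $n'$ and $m'$ even'' is subsumed there. When $n'$ is odd and $m'$ even, I expect $m=|n'm_k+m'n_k|$ to be even exactly for those $k$ giving $m_k$ even, which the recursion identifies as all $k$ when $n_1$ is odd and as even $k$ when $n_1$ is even — matching the clause with subscript $2k$. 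Symmetrically, when $m'$ is odd and $n'$ even, $m$ is even exactly when $m_k$ is odd, i.e.\ for odd $k$, giving the clause with subscript $2k+1$.

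The main obstacle is bookkeeping rather than depth: one must verify that the three clauses genuinely partition all parity possibilities for $(n',m')$ consistently with the constraint that $n'$ and $m'$ have opposite parity when $d$ is odd, and confirm that the index set ($k\in\mathbb Z$ versus even or odd $k$) produced by the parity of $m_k$ lines up exactly with the stated subscripts in each case. I would carry out the four-line parity substitution for each of the remaining cases and check that negative $k$ (which the text allows via $k\in\mathbb{Z}$) obey the same parity pattern as positive $k$ — this follows because $(n_1+m_1\sqrt d)^{-1}=n_1-m_1\sqrt d$ has the same component parities as $(n_1,m_1)$, so the parity of $(n_k,m_k)$ depends only on $k\bmod 2$ for all integers $k$. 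With that symmetry in hand, the lemma follows by direct inspection.
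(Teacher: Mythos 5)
Your strategy is the same as the paper's: the paper deduces Lemma \ref{lemsol} from exactly the parity bookkeeping you describe (the $d$ even case from the opening observation; for $d$ odd, the parity rules for products together with the dichotomy on the parity of $n_1$, applied to $m=|n'm_k+m'n_k|$; and your remark that the parity of $(n_k,m_k)$ depends only on $k\bmod 2$, also for negative $k$, is the same as the paper's). However, two steps in your matching argument genuinely fail. First, the claim that for $d$ odd exactly one of $n',m'$ is even is false: $(n',m')$ satisfies $n'^2-dm'^2=z^2$, not the Pell equation \eqref{pell2}, and nothing forces opposite parities. For $d=5$, $z=4$ the representative $(6,2)$ has both entries even, and for $d=5$, $z=2$ the representative $(3,1)$ has both entries odd. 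This is exactly why the first clause of the lemma includes ``both $n'$ and $m'$ even'' alongside ``$d$ even''; your dismissal of that clause as subsumed in the $d$-even case is wrong, and both-odd representatives (which contribute no solutions with $m$ even, since then $m\equiv n_k+m_k\equiv 1\pmod 2$) are never addressed at all.

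Second, and more seriously, your own computation contradicts the conclusion you draw from it. You correctly derive that when $d$ is odd and $n_1$ is odd, an (odd, even) representative yields $m$ even for \emph{all} $k$, and an (even, odd) representative yields $m$ even for \emph{no} $k$; asserting that this ``matches'' the clauses with subscripts $2k$ and $2k+1$ is a non sequitur. The discrepancy is real, not bookkeeping. For $d=5$ (where $n_1=9$ is odd) and $z=1$, the only representative within the stated bounds is $(n',m')=(1,0)$, and the solution $(9,4)$ has $m$ even but equals an \emph{odd} power of $9+4\sqrt{5}$, so the second clause misses it. Dually, for $d=39$ (where $n_1=25$ is odd) and $z=19$, the representative $(20,1)$ fed into the third clause produces $(20+\sqrt{39})(25+4\sqrt{39})=656+105\sqrt{39}$, whose $m=105$ is odd, so that clause is not even sound there. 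In other words, the stated clauses agree with the parity analysis only when $n_1$ is even (as in the paper's worked example $d=7$, $n_1=8$); when $n_1$ is odd the correct conclusions are ``all $k$'' in the second clause and ``no $k$'' in the third. A proof must either split on the parity of $n_1$ and say this explicitly, or flag that the statement needs amending; forcing the derived facts into the clauses as written is where your argument breaks.
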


\section{The case of the squares}\label{sq}
Given two natural numbers $a$ and $b$, the \emph{Narayana number}
$N(a,b)$ is defined by the formula
$$
N(a,b):=\frac{1}{a}\left(\!\!\!%
\begin{array}{c}
  a \\
  b \\
\end{array}%
\!\!\!\right)\left(\!\!\!%
\begin{array}{c}
  a \\
  b-1 \\
\end{array}%
\!\!\!\right).
$$

In this section we study when $N(a,b)$ is a perfect square.

Since given two natural numbers $a$ and $b$, we have
$N(a,a)=N(a,1)=1$, while $N(a,b)=0$ for $a<b$, we will always
assume in what follows that $a> b> 1$.

\medskip

\begin{comment}
We observe that
$$
N(a,b)=\frac{1}{a}\left(\!\!\!%
\begin{array}{c}
  a \\
  b \\
\end{array}%
\!\!\!\right)\left(\!\!\!%
\begin{array}{c}
  a \\
  b-1 \\
\end{array}%
\!\!\!\right)=\frac{b}{a(a-b+1)}\left(\!\!\!%
\begin{array}{c}
  a \\
  b \\
\end{array}%
\!\!\!\right)^2=\frac{(a-b+1)}{ab}\left(\!\!\!%
\begin{array}{c}
  a \\
  b-1 \\
\end{array}%
\!\!\!\right)^2.
$$
\end{comment}

\medskip

It is not hard to see that there are infinitely many pairs $(a,b)$
for which $N(a,b)$ is a square. We can prove a little more by
providing explicit families of such pairs.

\begin{Prop}\label{quad}
There are infinitely many pairs $(a,b)$ such that $N(a,b)$ is a
square.
More precisely:
\begin{enumerate}
\item\label{s1}  if $n$ is odd, then
$N\left(n^2,\frac{n^2+1}{2}\right)$ is a square; \item\label{s2}
if $n$ is even, then  $N\left(n^2-2,\frac{n^2-2}{2}\right)$ is a
square; \item\label{s3} for all $n$, $N(n^2(n^2+1), n^2+1)$ is a
square.
\end{enumerate}
\end{Prop}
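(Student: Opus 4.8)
The plan is to reduce $N(a,b)$ to the shape $(\text{rational})^2$ by peeling off a binomial coefficient. Starting from $\binom{a}{b-1}=\frac{b}{a-b+1}\binom{a}{b}$, one obtains the two rewritings
$$
N(a,b)=\frac{b}{a(a-b+1)}\binom{a}{b}^2=\frac{a-b+1}{ab}\binom{a}{b-1}^2 .
$$
In each the binomial factor is already a perfect square of an integer, so it suffices to check that the rational coefficient in front is the square of a rational number: since $N(a,b)$ is a positive integer and a positive integer that is a rational square is automatically a perfect square, the resulting square root is forced to be integral. Thus for all three families I only need a short computation identifying the coefficient and exhibiting it as a rational square.

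For (1) and (2) I would use the first rewriting, because both families pin $b$ very close to $a/2$. In (1), the choice $b=\frac{n^2+1}{2}$ with $a=n^2$ means exactly $a=2b-1$, hence $a-b+1=b$ and the coefficient collapses to $\frac{b}{a\cdot b}=\frac1a=\frac{1}{n^2}$. In (2), the choice $b=\frac{n^2-2}{2}$ with $a=n^2-2$ means $a=2b$, hence $a-b+1=b+1=\frac{n^2}{2}$ and the coefficient is $\frac{b}{2b(b+1)}=\frac{1}{2(b+1)}=\frac{1}{n^2}$. In both cases $N(a,b)=\bigl(\frac1n\binom{a}{b}\bigr)^2$, a perfect square.

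For (3) I would switch to the second rewriting, where $b-1=n^2$ keeps the binomial clean. The heart of the computation is the factorization $a-b+1=(n^2+1)(n^2-1)+1=n^4$, which together with $ab=n^2(n^2+1)^2$ gives coefficient $\frac{a-b+1}{ab}=\frac{n^4}{n^2(n^2+1)^2}=\bigl(\frac{n}{n^2+1}\bigr)^2$; hence $N(a,b)=\bigl(\frac{n}{n^2+1}\binom{a}{b-1}\bigr)^2$ is again a rational square, and so a perfect square.

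Since the statement already hands us the families, there is no real obstacle beyond the algebra: the only content is spotting the favourable structure ($a=2b-1$, $a=2b$, and the collapse of $a-b+1$ to the fourth power $n^4$) that turns the coefficients into the rational squares $1/n^2$ and $n^2/(n^2+1)^2$. The one point I would state carefully is the passage from \emph{rational square} to \emph{perfect square}; concretely, since $N(a,b)\in\mathbb Z_{>0}$, this amounts to the automatic divisibilities $n\mid\binom{a}{b}$ in (1)--(2) and $(n^2+1)\mid n\binom{a}{b-1}$ in (3), which need not be proved directly because they follow from $N(a,b)$ being an integer.
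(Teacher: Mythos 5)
Your proof is correct and follows essentially the same route as the paper: both peel off a squared binomial via $N(a,b)=\frac{b}{a(a-b+1)}\binom{a}{b}^2$ and verify that the rational coefficient is a square ($1/n^2$, $1/n^2$, and $n^6$ in the paper's version of case (3), versus your equivalent $n^2/(n^2+1)^2$ via the symmetric rewriting with $\binom{a}{b-1}$). If anything, you are more careful than the paper on the one subtle point --- that an integer which is a rational square is a perfect square --- which the paper uses implicitly.
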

\begin{proof}
We start with the following simple, but quite useful,
manipulation:
\begin{equation}\label{eq1}
N(a,b)=\frac{1}{a}\left(\!\!\!%
\begin{array}{c}
  a \\
  b \\
\end{array}%
\!\!\!\right)\left(\!\!\!%
\begin{array}{c}
  a \\
  b-1 \\
\end{array}%
\!\!\!\right)=\frac{b}{a(a-b+1)}\left(\!\!\!%
\begin{array}{c}
  a \\
  b \\
\end{array}%
\!\!\!\right)^2.
\end{equation}

Hence to check that $N(a,b)$ is a square it is enough to check
that $b/a(a-b+1)$ is.

For $n\in \mathbb{N}$, $n\geq 1$ odd, $(n^2+1)/2$ is a positive
integer, so, letting $a:=n^2$ and $b:=(n^2+1)/2$ we compute
$$
\frac{a(a-b+1)}{b}=\frac{2}{(n^2+1)}\left(n^2\left(n^2-\frac{(n^2+1)}{2}+1\right)\right)=n^2.
$$
This shows that, for $n>1$ odd $N\left(n^2,\frac{n^2+1}{2}\right)$
is always a square, proving  \eqref{s1}.

Similarly, for $n\in \mathbb{N}$, $n>1$ even, $(n^2-2)/2$ is a
positive integer. Setting $a:=n^2-2$ and $b:=(n^2-2)/2$, we
compute
$$
\frac{a(a-b+1)}{b}=\frac{2}{(n^2-2)}\left((n^2-2)\left((n^2-2)-
\frac{(n^2-2)}{2}+1\right)\right)=
2\left(\frac{(n^2-2)}{2}+1\right)=n^2.
$$
So for $n>2$ even, $N\left(n^2-2,\frac{n^2-2}{2}\right)$ is always
a square, establishing
\eqref{s2}.

Finally, for any integer $n\in \mathbb{N}$, letting
$a:=n^2(n^2+1)$ and $b:= n^2+1$ gives
$$
\frac{a(a-b+1)}{b}=\frac{n^2(n^2+1)\left(n^2(n^2+1)-
(n^2+1)+1\right)}{(n^2+1)}=n^6,
$$
so $N\left(n^2(n^2+1),n^2+1\right)$ is a square too, proving \eqref{s3}.
\end{proof}
\begin{Rem}
Notice that Proposition \ref{quad} does not cover all the pairs
$(a,b)$ such that $N(a,b)$ is a square. For instance $N(1728,28)$
and $N(63,28)$ are both squares (as we will see in the next
section) but they are not in the families appearing in Proposition
\ref{quad}.
\end{Rem}
\medskip

In fact we can do much better: for any given $b$, we can produce
all the $a$'s for which $N(a,b)$ is a square. It turns out that
there are infinitely many of them for every $b$.

We will show that the problem of finding all pairs $(a,b)$ such
that $N(a,b)$ is a square reduces to finding solutions of a
general Pell's equation.
\smallskip

The following theorem is the main result of this section. We
remark here that its proof gives an algorithm to compute all the
pairs $(a,b)$ for which $N(a,b)$ is a perfect square. Some explicit computations will be made in Section \ref{computations}.

\begin{Thm}\label{main}
For every fixed integer $b>1$, $N(a,b)$ is a perfect square for
infinitely many and  effectively computable integers $a$.
\end{Thm}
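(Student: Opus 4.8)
The plan is to convert ``$N(a,b)$ is a perfect square'' into a statement about a generalized Pell's equation of the shape studied in Section \ref{gPell}, and then to manufacture infinitely many solutions from the fundamental unit.

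First I would pin down an exact square-detecting criterion. Starting from the manipulation \eqref{eq1}, $N(a,b)=\frac{b}{a(a-b+1)}\binom{a}{b}^2$, I compare $p$-adic valuations: since $2\,v_p\!\binom{a}{b}$ is even for every prime $p$, the valuation $v_p(N(a,b))$ is even for all $p$ exactly when $v_p(a)+v_p(b)+v_p(a-b+1)$ is even for all $p$. Hence $N(a,b)$ is a perfect square if and only if $a\,b\,(a-b+1)$ is a perfect square. Writing $b=d f^2$ with $d$ squarefree, this is equivalent to $a(a-b+1)=d\,c^2$ for some integer $c$; completing the square through $n:=2a-(b-1)$ and $m:=2c$ turns it into
\[
n^2-d\,m^2=(b-1)^2,\qquad m\ \text{even},
\]
which is precisely equation \eqref{pell1} with $z=b-1$. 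Conversely, any solution with $m$ even forces $n\equiv b-1\pmod 2$, so $a=(n+b-1)/2$ is an integer and $N(a,b)$ is a square. Thus the admissible $a$ correspond bijectively to the even-$m$ solutions described by Lemma \ref{lemsol}.

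Next I would exhibit a seed and iterate. The value $a=b$ produces the explicit solution $(n,m)=(b+1,\,2f)$, since $(b+1)^2-d(2f)^2=(b+1)^2-4b=(b-1)^2$, and here $m$ is even. Multiplying $n+m\sqrt d$ by powers of the fundamental unit $n_1+m_1\sqrt d$ of \eqref{pell2} generates infinitely many solutions of \eqref{pell1}, and Lemma \ref{lemsol} guarantees that infinitely many of them keep $m$ even; translating back by $a=(n+b-1)/2$ gives infinitely many integers $a$. Since $n\to\infty$ along these solutions, the corresponding $a\to\infty$, so they are pairwise distinct and eventually exceed $b$, discarding the trivial seed and leaving genuinely nontrivial perfect squares $N(a,b)>1$. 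Effectiveness is built in: the description quoted from \cite{Zan} lists every solution as one of finitely many fundamental pairs $(n',m')$ times an integer power of the unit, and Lemma \ref{lemsol} isolates exactly the even-$m$ ones, so the whole list of admissible $a$ is computable.

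The step I expect to be the real obstacle is ensuring that the iteration actually yields \emph{infinitely many} admissible $a$. This hinges on the fundamental solution of \eqref{pell2} being nontrivial, which happens precisely when $d>1$, i.e. when $b$ is not a perfect square. When $b$ is a perfect square one has $d=1$, and \eqref{pell1} degenerates to $(n-m)(n+m)=(b-1)^2$, which has only finitely many solutions; that case would have to be treated separately, and in fact it yields no nontrivial square (for instance $a\,b\,(a-b+1)$ is never a nontrivial square when $b=4$). Granting $b$ non-square, the remaining care is essentially bookkeeping: confirming through Lemma \ref{lemsol} that the even-$m$ solutions form an infinite family and that the recovered $a$ are distinct, integral, and larger than $b$, all of which follow from the growth of the Pell solutions.
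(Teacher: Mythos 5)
Your proof takes essentially the same route as the paper: identity \eqref{eq1} turns ``$N(a,b)$ is a square'' into ``$ab(a-b+1)$ is a square'', which, after writing $b=df^2$ with $d$ squarefree and completing the square via $n=2a-(b-1)$, $m=2c$, becomes the generalized Pell equation $n^2-dm^2=(b-1)^2$ with $m$ even; Lemma \ref{lemsol} then isolates the even-$m$ solutions, and multiplying by powers of the fundamental unit produces infinitely many of them. The only cosmetic difference is the seed: the paper takes $(n',m')=(b-1,0)$ (i.e.\ $a=b-1$), you take $(b+1,2f)$ (i.e.\ $a=b$); both work equally well, and your parity bookkeeping and effectiveness argument match the paper's.

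The obstacle you flag at the end is not a weakness of your write-up --- it is a genuine gap in the paper's own proof, and you are right to isolate it. The paper's concluding claim (``there are always infinitely many, since we can always choose $(n',m')=(b-1,0)$'') silently assumes $d>1$: when $b$ is a perfect square one has $d=1$, the Pell equation $n^2-m^2=1$ has no nontrivial solutions, and $n^2-m^2=(b-1)^2$ has only the finitely many solutions coming from factorizations of $(b-1)^2$. Indeed Theorem \ref{main} as stated is false for $b=4$: there $N(a,4)$ is a square iff $a(a-3)$ is, and $(a-2)^2<a(a-3)<(a-1)^2$ for every $a>4$, so no admissible $a>b$ exists at all. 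Thus both your argument and the paper's really prove the statement only for $b$ not a perfect square. One small correction to your parenthetical remark, though: it is not true that a perfect-square $b$ never yields a nontrivial square --- for $b=36$, $a=324$ one has $a(a-b+1)=324\cdot 289=306^2$, so $N(324,36)$ is a nontrivial perfect square. What is true, and what kills the theorem in that case, is finiteness: any solution forces $2a-b+1\leq (b-1)^2$, hence only finitely many $a$ can occur.
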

\begin{proof}
Let $b$ be a positive integer, with $b=ds^2$ and $d$ is
square-free.

We want to find all integers $a$'s such that  $N(a,b)=c^2$ for
some integer $c$. Using \eqref{eq1}, this is equivalent to
$$
N(a,b)=\frac{b}{a(a-b+1)}\left(\!\!\!%
\begin{array}{c}
  a \\
  b \\
\end{array}%
\!\!\!\right)^2=c^2,
$$
so $N(a,b)$
is a square if and only if $ab(a-b+1)$ is a square.

We now show that this problem is equivalent to finding the
integral solutions $(n,m)$ of the Pell's equation
\begin{equation} \label{eq:genPell}
n^2-dm^2=(b-1)^2
\end{equation}
such that $m$ is even.

In fact, assume that $a$ is an integer such that
\begin{equation}\label{equs}ab(a-b+1)={c'}^2\end{equation} for
some integer $c'$. Notice that from \eqref{equs}, $b$ divides
$(c')^2$, hence $ds$ divides $c'$. It is now easy to check that
the pair
\[(n,m)=\left(2a+1-b, 2s\frac{c'}{b}\right)\] is an integral
solution of \eqref{eq:genPell} with $m$ even.

On the other hand, suppose to be given a solution $(n,m)$ of
\eqref{eq:genPell} with $m$ even. Notice that this implies that
$n$ and $b-1$ have the same parity. So \[a=\frac{n+b-1}{2}\] is an
integer and one can easily check  that
$$
N(a,b)=\left(\frac{2s}{m} \binom{a}{b}\right)^2.
$$

Now Lemma \ref{lemsol} shows how to compute all the positive
solutions $(n,m)$ of equation \eqref{eq:genPell} with $m$ even.

There are always infinitely many, since for example, in the
notation of the lemma, we can always choose $(n',m')=(b-1,0)$.
This completes the proof.
\end{proof}

\section{Some explicit computation} \label{computations}

We show how the proof of Theorem \ref{main} is effective by
computing an explicit example.

Let $b=28$. We want all the $a$'s such that $N(a,28)$ is a square
greater than $1$. The algorithm is the following.

Keeping the above notation, we write $b=7\cdot 2^2$, so $d=7$,
$s=2$ and equation \eqref{eq:genPell} becomes
\begin{equation}\label{eq7}
n^2-7m^2={27}^2.
\end{equation}
To solve it, from the discussion of Section \ref{gPell}, first we
have to find the fundamental solution $(n_1,m_1)$ of the equation
\begin{equation}\label{equa8}
n^2-7m^2=1.
\end{equation}
The continued fraction expansion of $\sqrt{7}$ is
\begin{equation}\label{contfr}
\sqrt{7}=2+\cfrac{1}{1+\cfrac{1}{1+\cfrac{1}{1+\cfrac{1}{4+\ldots}}}},
\end{equation}
or better, in the standard notation,
$\sqrt{7}=[2,1,1,1,4,1,1,1,4,\dots]$. So it has period of length
$4$, which is even, hence truncating the expansion in
\eqref{contfr} at the end of the first period we get
$\frac{8}{3}$. Therefore
 $(n_1,m_1)=(8,3)$ is the fundamental solution we sought for.

Now all positive solutions $(n,m)$ of \eqref{eq7} can be found as
$$
n\pm \sqrt{7} m=(n'+m'\sqrt{7})(8+3\sqrt{7})^k,
$$
where $k\in \mathbb{Z}$ and $(n',m')$ is any solution of
\eqref{eq7} such that
\begin{equation}\label{range}
|n'|<27\sqrt{8+3\sqrt{7}}<27\cdot 4 \phantom{rrrr} \text{and}
\phantom{rrrr} |m'|<27\sqrt{\frac{8+3\sqrt{7}}{7}}<27\cdot 2.
\end{equation}
So it sufficient to compute $(n')^2-7(m')^2$ for all such values
of $n'$ and $m'$ and see which one satisfies \eqref{eq7}. In this
case all the solutions $(n',m')$ of \eqref{eq7} in the range
\eqref{range} are
\begin{equation}\label{list}
(27, 0),\,\, (29, 4),\,\, (36, 9),\,\, (48, 15)\,\, (69,24),\,\,
(99,36).
\end{equation}

We denote by $(n_k,m_k)$ the solution of \eqref{equa8} obtained as
$$
n_k+m_k\sqrt{7}=(8+3\sqrt{7})^k
$$
with $k\in \mathbb{Z}$.

All the positive solutions $(n,m)$ of \eqref{eq7} are then of the
form
$$
n\pm m\sqrt{d}=(n'+m'\sqrt{d})(n_k+m_k\sqrt{d})\quad \text{ for
}k\in \mathbb{Z}
$$
where $(n',m')$ is in our list \eqref{list}.
\begin{Rem}
In fact notice that
$$
(36+9\sqrt{7})(n_{-1}+m_{-1}\sqrt{7})=(36+9\sqrt{7})(8-3\sqrt{7})=99-36\sqrt{7},
$$
and consequently
$$
(99+36\sqrt{7})(n_{-1}+m_{-1}\sqrt{7})=(99+36\sqrt{7})(8-3\sqrt{7})=36-9\sqrt{7}.
$$
Similarly,
$$
(48+15\sqrt{7})(n_{-1}+m_{-1}\sqrt{7})=(48+15\sqrt{7})(8-3\sqrt{7})=69-24\sqrt{7},
$$
and consequently
$$
(69+24\sqrt{7})(n_{-1}+m_{-1}\sqrt{7})=(69+24\sqrt{7})(8-3\sqrt{7})=48-15\sqrt{7}.
$$

So we can restrict our list \eqref{list} to
\begin{equation}\label{newlist}
(27, 0),\,\, (29, 4),\,\, (36, 9),\,\, (48, 15).
\end{equation}
\end{Rem}

Remember that we are looking for integral solutions of \eqref{eq7}
with $m$ even.

Since $d=7$ is odd and $n_1=8$ is even, applying Lemma
\ref{lemsol} we have that all the positive solutions $(n,m)$ of
\eqref{eq7} with $m$ even are of the form
$$
n\pm m\sqrt{7}=\left\{\begin{array}{l}
  27(n_{2k}+m_{2k}\sqrt{7}) \\
  (29+4\sqrt{7})(n_{2k}+m_{2k}\sqrt{7}) \\
  (36+9\sqrt{7})(n_{2k+1}+m_{2k+1}\sqrt{7}) \\
  (48+15\sqrt{7})(n_{2k+1}+m_{2k+1}\sqrt{7}) \\
\end{array}\right. \quad \text{ with }k\in \mathbb{Z}.
$$

Now for all such solutions $(n,m)$, the proof of Theorem
\ref{main} shows that
$$
a:=\frac{n+b-1}{2}=\frac{n+27}{2}
$$
is an integer such that $N(a,b)=N(a,28)$ is a perfect square
whenever $a>28=b$.

\smallskip

In this way we can effectively construct, by a finite search in an
explicit bounded interval, all the pairs $(a,28)$ for which
$N(a,28)$ is a square.
 \medskip

We now list some explicit computations.
 \begin{Ex}

Let us take $(n',m')=(27,0)$.

\smallskip

For $k=0$, $(n_0,m_0)=(1,0)$ and we get $a=(1+27)/2=14<b$, which
we disregard.

\smallskip

For $k=1$ we get $(n_{2k},m_{2k})=(n_2,m_2)=(127,48)$, so
$(n,m)=(27\cdot 127,27\cdot 48)=(3429,1296)$ and
$$
a=\frac{3429+ 27}{2}=1728
$$
for which
\begin{eqnarray*}
N(1728,28) & = &
36393925811128600489003879513323005869574641433293468096956^2\\
 & = & \left(\frac{2\cdot 2
}{1296}\binom{1728}{28}\right)^2
\end{eqnarray*}

\smallskip

For $k=2$ we have $(n_{2k},m_{2k})=(n_4,m_4)=(32257,12192)$ and
$(n,m)=(27\cdot 32257 ,27\cdot 12192)=(870939,329184)$, which
gives $a=(870939+27)/2=435483$. Indeed it is possible to check
that
$$
N(435483,28)=\left(\frac{2\cdot 2
}{329184}\binom{435483}{28}\right)^2.
$$

\smallskip
\end{Ex}
\begin{Ex}
As another example, take $(n',m')=(36,9)$ from the list.

\smallskip

For $k=-1$ we have
\begin{eqnarray*}
(36+9\sqrt{7})(n_{2k+1}+m_{2k+1}\sqrt{7}) & = &
(36+9\sqrt{7})(n_{-1}+m_{-1}\sqrt{7})\\
 & = & (36+9\sqrt{7})(8-3\sqrt{7})=99-36\sqrt{7}.
\end{eqnarray*}
So $(n,m)=(99,36)$, hence $a=(99+27)/2=63$ and indeed
$$
N(63,28)=69923143311577493^2=\left(\frac{2\cdot 2
}{36}\binom{63}{28}\right)^2.
$$

\smallskip

Finally, for $k=0$ we have
\begin{eqnarray*}
(36+9\sqrt{7})(n_{2k+1}+m_{2k+1}\sqrt{7}) & = &
(36+9\sqrt{7})(n_{1}+m_{1}\sqrt{7})\\
 & = & (36+9\sqrt{7})(8+3\sqrt{7})=477+180\sqrt{7},
\end{eqnarray*}
which gives $(n,m)=(477,180)$. Therefore $a=(477+27)/2=252$, and
$$
N(252,28)=266280675495914347757098255444196475^2=\left(\frac{2\cdot
2}{180}\binom{252}{28}\right)^2.
$$
\end{Ex}

\medskip

It is amusing to see how the pairs $(a,b)$ for which $N(a,b)$ is a
square distribute. We plotted in Figure 1 such pairs for $a\leq
2000$, but only for the values $b\leq a/2$, because of the
symmetry $N(a,b)=N(a,a-b+1)$.

\vfill

\begin{figure}[h]
\includegraphics[scale=0.35]{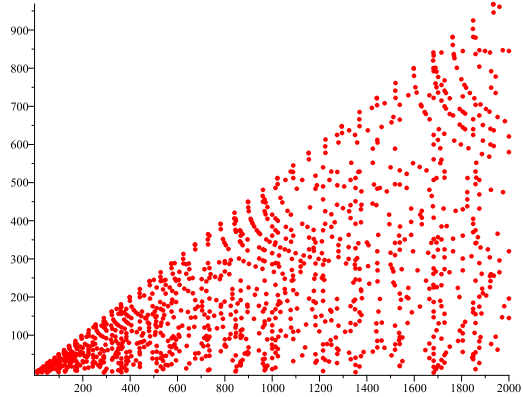}%[width=260mm,clip=true,trim=10mm 180mm 30mm 10mm]{Figure1.eps}
\caption{The red dots are the pairs $(a,b)$ with $b\leq a/2$ such
that $N(a,b)$ is a square.}
\end{figure}

\section{Higher powers}\label{hp}

In this section we investigate when a Narayana number is a perfect
power $m^k$ of some integer $m$ with $k>2$. Compared to the case
of squares, here things become more complicated.

We are concerned here with Conjecture \ref{conNara} from the
introduction, i.e. that no Narayana number is a perfect $k$-th
power of an integer for $k\geq 3$.

While the full conjecture seems to be out of reach, we provide
here evidences by presenting some partial results.

Consider the equation
$$
N(a,b)=m^k\quad \text{ for integers }\,\, m\geq 2\,\, \text{ and
}\,\, k\geq 1.
$$
From this and \eqref{eq1} we get the two equations
\begin{equation} \label{eq:1}
(a-b+1)\left(\!\!\!%
\begin{array}{c}
  a \\
  b-1 \\
\end{array}%
\!\!\!\right)^2=ab\, m^k
\end{equation}
and
\begin{equation} \label{eq:2}
b\left(\!\!\!%
\begin{array}{c}
  a \\
  b \\
\end{array}%
\!\!\!\right)^2=a(a-b+1)m^k.
\end{equation}

We start with the following proposition.
\begin{Prop} \label{lem:a=p}
Let $a,b$ be positive integers with $b\leq a/2$. Suppose that $N(a,b)=m^k$ for some positive integers $m$ and $k$.
Then:
\begin{enumerate}
\item\label{casop} if $a=p$ is a prime, then $k=1$;
\item\label{casop2} if $a=p^2$ is the square of a prime, then $k\leq 2$.
\end{enumerate}
\end{Prop}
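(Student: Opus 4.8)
The plan is to analyze the prime factorization of the two equations \eqref{eq:1} and \eqref{eq:2} with respect to the prime $p$ dividing $a$. Since we assume $b \leq a/2$, the binomial coefficient $\binom{a}{b}$ is an integer built from the factors $a(a-1)\cdots(a-b+1)$ divided by $b!$, and crucially the factor $a=p$ (resp. $a=p^2$) appears in the numerator. The idea is that the prime $p$ divides $N(a,b)=m^k$ to some controlled power, and if $N(a,b)$ is a perfect $k$-th power then $p^k \mid N(a,b)$, which forces $k$ to be small because $p$ cannot divide $N(a,b)$ to a high power when $a$ itself is a small power of $p$.

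For part \eqref{casop}, suppose $a=p$ is prime. First I would compute the exact $p$-adic valuation $v_p(N(p,b))$. Writing $N(p,b)=\frac{1}{p}\binom{p}{b}\binom{p}{b-1}$, and using that for $0<b<p$ the prime $p$ divides $\binom{p}{b}$ exactly once (since $\binom{p}{b}=\frac{p}{b}\binom{p-1}{b-1}$ and $p\nmid b, p\nmid \binom{p-1}{b-1}$), I get $v_p\!\left(\binom{p}{b}\right)=1$ and likewise $v_p\!\left(\binom{p}{b-1}\right)=1$ for $1<b\le p/2$. Hence $v_p(N(p,b)) = 1+1-1 = 1$. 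Since $N(p,b)=m^k$, we have $k \mid v_p(N(p,b)) = 1$, so $k=1$.

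For part \eqref{casop2}, the same strategy applies with $a=p^2$. I would compute $v_p\!\left(\binom{p^2}{b}\right)$ using Kummer's theorem (the $p$-adic valuation equals the number of carries when adding $b$ and $p^2-b$ in base $p$), or directly via Legendre's formula, for $1 < b \le p^2/2$. The expectation is that $v_p\!\left(\binom{p^2}{b}\right)=2-v_p(b)$ for the relevant range, giving $v_p(N(p^2,b)) = v_p\!\binom{p^2}{b}+v_p\!\binom{p^2}{b-1}-2$. The key is that this total valuation turns out to be at most $2$, so that $k \mid v_p(N(p^2,b))$ forces $k\le 2$; one must be slightly careful to rule out the degenerate possibility $v_p(N(p^2,b))=0$ (which would only happen in excluded boundary cases) so that the conclusion $k\le 2$ is nonvacuous.

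The main obstacle I anticipate is the bookkeeping of $p$-adic valuations in part \eqref{casop2}: unlike the prime case, $b$ may itself be divisible by $p$ (or by $p^2$ when $b=p^2/2$ is excluded by $b\le a/2$ only in the non-strict sense), so $v_p\!\left(\binom{p^2}{b}\right)$ depends on $v_p(b)$ and $v_p(b-1)$, and I must verify that the valuations of the two adjacent binomials $\binom{p^2}{b}$ and $\binom{p^2}{b-1}$ combine to give a total valuation bounded by $2$ across all admissible $b$. Handling the interplay between consecutive integers $b$ and $b-1$ (at most one of which is divisible by $p$) is where the careful case analysis lives; once the valuation bound $v_p(N(p^2,b))\le 2$ is established, the divisibility $k \mid v_p(N(p^2,b))$ immediately yields $k\le 2$.
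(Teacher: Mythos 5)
Your proposal is correct and takes essentially the same approach as the paper: both proofs compute the $p$-adic valuation of $N(a,b)$ via the identity $v_p\binom{p^n}{t}=n-v_p(t)$ and conclude from the fact that $k$ divides $v_p(m^k)$. In particular, your bookkeeping for $a=p^2$ --- that $p$ cannot divide both $b$ and $b-1$, and that $v_p(b),v_p(b-1)\leq 1$ since $b<p^2$, so $v_p(N(p^2,b))\in\{1,2\}$ and is in particular nonzero --- is exactly the paper's case analysis.
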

\begin{proof}
Case \eqref{casop}: Observe that clearly $p$ does not divide both
$b$ and $p-b+1$;
moreover $p$ divides $\left(\!\!\!%
\begin{array}{c}
  p \\
  b \\
\end{array}%
\!\!\!\right)$ exactly once. Hence \eqref{eq:2} with $a=p$ implies
that $p$ divides $m^k$ exactly once, therefore we must have $k=1$.
\smallskip\\
Case \eqref{casop2}: We start by recalling the following formula.
For a prime $p$, we denote by $v_p$ the \emph{$p$-adic valuation},
i.e. for $n\in \mathbb{N}$, $v_p(n)$ is the greatest nonnegative
integer $h$ such that $p^h$ divides $n$. Then it is well known and
easy to show that
\[v_p\binom{p^n}{t}=n-v_p(t)\] for all positive integers $n,t$.

So \[v_p(N(p^2,b))=-2+(2-v_p(b))+(2-v_p(b-1)).\] Notice that $p$
cannot divide both $b$ and $b-1$. Moreover $v_p(b)$ and $v_p(b-1)$
are either 0 or 1, since $a=p^2>b>b-1$. Summing up, this tells us
that $v_p(N(p^2,b))\in \{1,2\}$ and so $N(p^2,b)$ cannot be a
perfect $k$-th power with $k>2$.
\end{proof}

\begin{Rem}
Notice that for $a=p^r$ with $r\geq 3$, and for general $b$, the
same argument only shows that $k\leq r$. So in this case we need
another strategy.
\end{Rem}

We are going to use the following result, well known as Bertrand's
postulate, and first proved by Tchebyshev.
\begin{Thm}[Tchebyshev]
For all $n>0$ there is a prime $p$ such that $n<p\leq 2n$.
\end{Thm}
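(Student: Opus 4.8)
The plan is to establish this classical fact (Bertrand's postulate) by Erd\H{o}s's elementary argument, which sits comfortably in the $p$-adic framework already developed above. I would argue by contradiction: fix $n$ and \emph{suppose} that no prime $p$ satisfies $n<p\le 2n$, then extract enough arithmetic information from the central binomial coefficient $\binom{2n}{n}$ to contradict a crude size estimate. The whole proof rests on squeezing $\binom{2n}{n}$ between an elementary lower bound and an upper bound read off from its prime factorization.

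First I would record the three inputs. (i) Since $\binom{2n}{n}$ is the largest of the $2n+1$ summands of $(1+1)^{2n}=4^{n}$, one has the lower bound $\binom{2n}{n}\ge 4^{n}/(2n)$. (ii) By Kummer's theorem (equivalently, Legendre's formula for $v_p(\,\cdot\,!)$), the exponent $v_p\binom{2n}{n}$ satisfies $p^{\,v_p\binom{2n}{n}}\le 2n$; in particular every prime with $p>\sqrt{2n}$ divides $\binom{2n}{n}$ at most once, and a short valuation computation shows that no prime in the range $2n/3<p\le n$ divides $\binom{2n}{n}$ at all (for $n\ge 3$). (iii) The primorial bound $\prod_{p\le x}p\le 4^{x}$, proved by a quick induction splitting at the middle binomial coefficient $\binom{2m+1}{m}$.

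Then I would assemble the upper bound. Grouping the prime factors of $\binom{2n}{n}$ into those $\le\sqrt{2n}$ (at most $\sqrt{2n}$ of them, each contributing a factor $\le 2n$) and those in $(\sqrt{2n},\,2n/3]$ (each to the first power, with total at most $\prod_{p\le 2n/3}p\le 4^{2n/3}$), and using that the ranges $(2n/3,n]$ and, by our assumption, $(n,2n]$ contribute nothing, I obtain
\[
\frac{4^{n}}{2n}\ \le\ \binom{2n}{n}\ \le\ (2n)^{\sqrt{2n}}\,4^{2n/3}.
\]
Rearranged, this reads $4^{n/3}\le (2n)^{\sqrt{2n}+1}$, an inequality that is false once $n$ is large enough, yielding the desired contradiction.

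The main obstacle is purely quantitative: the inequality above only fails beyond an explicit threshold, so I would (a) pin down that threshold by taking logarithms, and (b) dispose of the finitely many remaining small $n$ by exhibiting the standard chain of primes $2,3,5,7,13,23,43,83,163,317,631$, in which each term is less than twice its predecessor, so that for every remaining small $n$ one of these primes $p$ satisfies $n<p\le 2n$. Matching the constant in (a) with the reach of the chain in (b) is the only delicate bookkeeping in the argument.
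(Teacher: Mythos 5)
Your proposal is correct, but there is nothing in the paper to compare it against step by step: the paper does not prove this statement at all. It quotes Bertrand's postulate as a classical result (``first proved by Tchebyshev'') and uses it as a black box, just as it quotes Ramanujan's theorem in Section 1 with a pointer to \cite{shap}. What you have reconstructed is Erd\H{o}s's elementary proof, and the skeleton is sound: the lower bound $\binom{2n}{n}\ge 4^n/(2n)$, the estimate $p^{\,v_p\binom{2n}{n}}\le 2n$ from Legendre's formula, the vanishing of $v_p\binom{2n}{n}$ for $2n/3<p\le n$ (valid for $n\ge 3$), and the bound $\prod_{p\le x}p\le 4^x$ do combine, under the assumption that $(n,2n]$ contains no prime, into $4^{n/3}\le(2n)^{\sqrt{2n}+1}$. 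Two pieces of the bookkeeping you defer deserve explicit care. First, with your constants the inequality fails from roughly $n\approx 470$ onward, but to conclude that it fails for \emph{all} $n$ past the threshold you must also check that $\tfrac{n}{3}\log 4-(\sqrt{2n}+1)\log(2n)$ is increasing there (its derivative is positive for such $n$), not merely positive at a single value. Second, your chain $2,3,5,7,13,23,43,83,163,317,631$ covers exactly the range $n<631$, which does exceed the analytic threshold, so the two halves of the argument meet --- though note that the usual textbook presentation uses weaker, cleaner estimates and therefore must extend the chain to $4001$. As for what each route buys: the paper's citation keeps the exposition short, which is reasonable since the result is classical and the paper's contribution lies in its applications to Catalan and Narayana numbers; your proof makes the statement self-contained, using only the kind of $p$-adic valuation arguments for binomial coefficients that the paper itself deploys elsewhere in this section, and, suitably sharpened, the same method can even recover Ramanujan's stronger statement (two primes in $(n,2n]$ for $n\ge 6$) on which Section 1 relies.
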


The following theorem is the main result of this section.

\begin{Thm} \label{thm1main}
Let $a$ be a positive integer and let $p$ be the biggest prime
such that $p<a$. Suppose that $a/2\geq b>a-p+1$. Then $N(a,b)=m^k$
for some $m\in \mathbb{N}$, only for $k\leq 2$.
\end{Thm}
\begin{proof}
By  Tchebyshev's Theorem, we can find a prime $p$ such that
$\lfloor (a+1)/2\rfloor <p <a+1$. Because of Lemma \ref{lem:a=p},
we can assume that $p<a$. Observe that $p$ cannot divide $a$.

By assumption $a/2\geq b$, so $p>b$. Hence $p$ does not divide
$b$.

We look first at the case where $p\neq a-b+1$.

We set $c:=a-p$, so that $a-b+1=p+c-b+1$. Now $p$ does not divide
$a-b+1$, since it does not divide $c-b+1$: indeed $b-c-1>0$, thus
$b-c-1\leq b-2<p$. Now \eqref{eq:2} with $a=p+c$ becomes
$$
b\left(\!\!\!%
\begin{array}{c}
  p+c \\
  b \\
\end{array}%
\!\!\!\right)^2=(p+c)(p+c-b+1)m^k.
$$
By hypothesis $b>c+1$, hence $p$ divides
$\left(\!\!\!\begin{array}{c}
  p+c \\
  b \\
\end{array}%
\!\!\!\right)$ exactly once. Since $p$ does not divide $b$,
$p+c=a$ and $p+c-b+1=a-b+1$, it must divide $m$. But $p$ divides
the left hand side exactly twice, so it must divide $m^k$ exactly
twice. In particular we must have $k\leq 2$.

It remains to check the case where $p$ is equal to $a-b+1$. If we
set $a-b+1=p$ in the equation \eqref{eq:1}, we get

$$
p\left(\!\!\!%
\begin{array}{c}
  a \\
  p \\
\end{array}%
\!\!\!\right)^2=ab\, m^k.
$$
Now, since $\lfloor (a+1)/2\rfloor <p<a$, $p$ does not divide $\left(\!\!\!%
\begin{array}{c}
  a \\
  p \\
\end{array}%
\!\!\!\right)$. Since $p$ does not divide both $a$ and $b$, it
must divide exactly once $m^k$, which implies $k=1$.

This completes the proof of the theorem.
\end{proof}

For $c\in \mathbb{N}$, let us call $P(c)$ the greatest prime $p$
that divides $c$. It can be shown \cite{shoreytijdeman} (see also
\cite{laishramshorey}) that for $n\geq 2k>0$
$$
P\left(\left(\!\!\!%
\begin{array}{c}
  n \\
  k \\
\end{array}%
\!\!\!\right)\right)>1.95 k.
$$
\begin{Thm} \label{thm2main}
If $N(a,b)=m^k$ and $a/2\geq b\geq \sqrt{a}/1.95$, then $k\leq 2$.
\end{Thm}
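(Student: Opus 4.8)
The plan is to mimic the strategy of Theorem \ref{thm1main}, but to replace the single large prime coming from Tchebyshev's postulate with a suitably large prime divisor of a binomial coefficient, extracted from the Shorey--Tijdeman bound $P\!\left(\binom{n}{k}\right)>1.95\,k$. First I would recall the two reformulations \eqref{eq:1} and \eqref{eq:2} of the equation $N(a,b)=m^k$, so that I can play off the prime valuations on the two sides. Since $a/2\geq b$, both $\binom{a}{b}$ and $\binom{a}{b-1}$ make sense with $a\geq 2b>2(b-1)$, so the Shorey--Tijdeman inequality is applicable to each.

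The heart of the argument will be to apply the bound to the binomial coefficient $\binom{a}{b}$ appearing in \eqref{eq:2}: there is a prime $p:=P\!\left(\binom{a}{b}\right)$ with $p>1.95\,b\geq \sqrt{a}$, the last inequality being exactly the hypothesis $b\geq \sqrt{a}/1.95$. The key point is that such a large prime can divide $\binom{a}{b}$ only to the first power. Indeed, by Kummer's theorem $v_p\!\left(\binom{a}{b}\right)$ counts the number of carries when adding $b$ and $a-b$ in base $p$; since $p>\sqrt{a}$ we have $a<p^2$, so $a$ has at most two base-$p$ digits and there can be at most one carry, giving $v_p\!\left(\binom{a}{b}\right)\leq 1$. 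Combined with $p\mid\binom{a}{b}$ this forces $v_p\!\left(\binom{a}{b}\right)=1$, so the left-hand side $b\binom{a}{b}^2$ of \eqref{eq:2} is divisible by $p$ exactly $2+v_p(b)$ times.

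Next I would control the $p$-adic valuation of the remaining factors. Because $p>1.95\,b>b$, the prime $p$ does not divide $b$, so $v_p(b)=0$ and the left-hand side of \eqref{eq:2} is divisible by $p$ exactly twice. On the right-hand side $a(a-b+1)m^k$, the prime $p>\sqrt{a}$ can divide $a$ at most once (again since $a<p^2$) and likewise can divide $a-b+1\leq a$ at most once; moreover $p$ cannot divide both $a$ and $a-b+1$ simultaneously, since their difference $b-1<p$. Balancing valuations in \eqref{eq:2} then yields $v_p(m^k)=2-v_p\!\left(a(a-b+1)\right)\in\{0,1,2\}$, and since $k\cdot v_p(m)=v_p(m^k)$, the only way to get $k\geq 3$ would be $v_p(m^k)=0$, i.e.\ $v_p(a(a-b+1))=2$. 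I would then rule this last possibility out by a short case check: $v_p(a(a-b+1))=2$ requires $p\mid a$ and $p\mid(a-b+1)$, contradicting the observation that $p\nmid\gcd(a,a-b+1)$. Hence $v_p(m^k)\in\{1,2\}$, which already forces $k\leq 2$.

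The main obstacle I anticipate is the bookkeeping at the boundary, specifically the degenerate configurations where the large prime $p$ happens to equal $a-b+1$ or to divide $a$, as in the corresponding step of Theorem \ref{thm1main}; in those cases one switches to \eqref{eq:1} (which carries the factor $\binom{a}{b-1}$, to which Shorey--Tijdeman applies equally well since $P\!\left(\binom{a}{b-1}\right)>1.95(b-1)$) and repeats the valuation count to again pin $v_p(m^k)\leq 2$. A secondary subtlety is making sure the inequality chain $p>1.95\,b\geq\sqrt{a}$ genuinely delivers $a<p^2$ with strict inequality so that the at-most-one-carry argument is airtight; this is where the precise constant $1.95$ in the hypothesis is used, and I would verify it cannot be weakened without losing the $a<p^2$ conclusion.
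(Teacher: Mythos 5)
Your proposal is correct and takes essentially the same route as the paper: apply the Shorey--Tijdeman bound to get a prime $p=P\left(\binom{a}{b}\right)>1.95b\geq\sqrt{a}$, show that $p$ divides $\binom{a}{b}$ exactly once because $a<p^2$, and then balance $p$-adic valuations in \eqref{eq:2}, using that $p\nmid b$ and that $p$ cannot divide both $a$ and $a-b+1$, to force $v_p(m^k)\in\{1,2\}$ and hence $k\leq 2$. The only (cosmetic) difference is that you justify $v_p\left(\binom{a}{b}\right)\leq 1$ via Kummer's carry-counting theorem, while the paper cites Mignotte's theorem that each prime power dividing a binomial coefficient must divide a single factor of the numerator; also, the degenerate cases you flag at the end ($p=a-b+1$ or $p\mid a$) need no separate treatment, since your valuation count already covers them.
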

\begin{proof}
Using \eqref{eq:2}, we can rewrite the condition $N(a,b)=m^k$ as
\begin{equation} \label{eq:3}
b\left(\!\!\!%
\begin{array}{c}
  a \\
  b \\
\end{array}%
\!\!\!\right)^2=a(a-b+1)m^k.
\end{equation}
Let
$$
p:=P\left(\left(\!\!\!%
\begin{array}{c}
  a \\
  b \\
\end{array}%
\!\!\!\right)\right).
$$
From what we observed before this theorem, we know that
$p>1.95b\geq \sqrt{a}$.

By a theorem of Mignotte \cite{mignotte} (see also \cite{selmer}),
if we have the prime factorization
$$
\left(\!\!\!%
\begin{array}{c}
  n \\
  k \\
\end{array}%
\!\!\!\right)=\frac{n(n-1)\cdots
(n-k+1)}{k!}=p_{1}^{\alpha_1}p_{2}^{\alpha_2}\cdots
p_{j}^{\alpha_j},
$$
then each prime power $p_{i}^{\alpha_i}$ must divide one of the
factors of the numerator.

Notice that $p^2>a$, so, using Mignotte's theorem, $p$ divides $\left(\!\!\!%
\begin{array}{c}
  a \\
  b \\
\end{array}%
\!\!\!\right)$ exactly once.

Now $p$ does not divide $b$ since $p>1.95b>b$, so $p$ divides the
left hand side of \eqref{eq:3} exactly twice.

But $p$ cannot divide both $a$ and $a-b+1$, so it divides $m^k$
one or two times. This implies that $k\leq 2$, as we wanted.
\end{proof}

\begin{Rem}
Notice that neither of the two theorems of this section implies
the other. For example, for $a=1362$, the greatest prime $p$ which
is smaller than $a$ is $1361$, hence Theorem \ref{thm1main} shows
that $N(1362,b)=m^k$ implies $k\leq 2$ for $b>1362-1361+1=2$,
while Theorem \ref{thm2main} gives the result only for
$b>\sqrt{1362}/1.95\simeq 18.93$.

On the other hand, for $a=1360$, the greatest prime $p$ which is
smaller than $a$ is $1327$, hence Theorem \ref{thm1main} shows
that $N(1360,b)=m^k$ implies $k\leq 2$ for $b>1360-1327+1=34$,
while Theorem \ref{thm2main} gives the result for
$b>\sqrt{1360}/1.95\simeq 18.91$, which is a better bound.

In fact for $a$ big enough (say $a>5000$) the bound on $b$ of
Theorem \ref{thm1main} seems to be always better than the one of
Theorem \ref{thm2main}, as Figure 2 suggests.
\end{Rem}

\begin{figure}[h]
\includegraphics[scale=0.35]{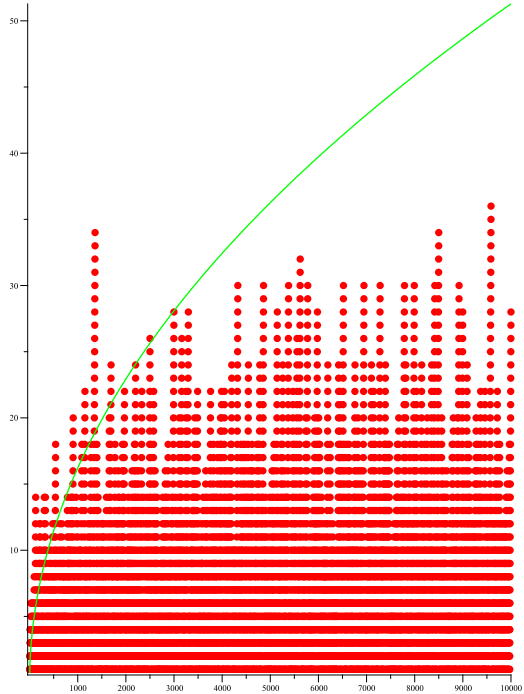}%[width=260mm,clip=true,trim=10mm 180mm 30mm 10mm]{Figure1.eps}
\caption{The red dots are the pairs $(a,a-p+1)$, where $p$ is the
greatest prime smaller than $a$, while the green curve is the
function $x\mapsto \sqrt{x}/1.95$.}
\end{figure}

\subsection{Conjecture \ref{conNara} for small values of $b$}

Notice that both theorems cover only cases in which $b$ is ``not
too small''. For small $b$'s things can become complicated, though
something can be said.

Consider for instance the case $b=2$. Then
\[N(a,2)=\frac{a(a-1)}{2}=\left(\!\!\!%
\begin{array}{c}
  a \\
  2 \\
\end{array}%
\!\!\!\right)\] is just a binomial.

The fact that $N(a,2)=m^k$ implies $k\leq 2$ is then proved in
\cite{gyory}.

\smallskip

Consider now the case $b=3$. Then the equation
$$
N(a,3)=\frac{a(a-1)^2(a-2)}{12}=m^k
$$
is equivalent to
$$
(a-1)^4-(a-1)^2=12m^k
$$
or
$$
\left(2(a-1)^2-1\right)^2-48m^k=1.
$$

Now it would follow from Pillai's generalization of Catalan's
conjecture that there are at most finitely many exceptions to Conjecture \ref{conNara} in this case:
\begin{con}[Pillai's conjecture] For any triple of positive
integers $a,b,c$, the equation $ax^n-by^m=c$ has only finitely
many solutions $(x,y,m,n)$ with $(m,n) \neq (2,2)$.
\end{con}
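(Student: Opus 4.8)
The plan is to split the problem according to the size of the exponent pair $(m,n)$, handling the bounded and the unbounded regimes by entirely different tools. Fix $a,b,c$; to avoid trivialities I take the intended hypothesis to be $m,n\geq 2$ (if either exponent equals $1$ the equation $ax^n-by^m=c$ degenerates to a linear relation in one base and already has infinitely many solutions), so that the exclusion of $(m,n)=(2,2)$ forces $1/m+1/n\leq 5/6<1$. The statement then reduces to two tasks: finiteness of $(x,y)$ for each fixed admissible exponent pair, and a uniform bound $E=E(a,b,c)$ with $\max(m,n)\leq E$ for all solutions.

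For each \emph{fixed} admissible $(m,n)$, solving $aX^n-bY^m=c$ for $X$ exhibits it as a superelliptic curve $X^n=g(Y)$ with $g(Y)=(bY^m+c)/a$ of degree $m$ and separable (its roots are distinct because $c\neq 0$). The standard genus formula gives genus $\geq 1$ precisely because $(m,n)\neq(2,2)$ with $m,n\geq 2$, so Siegel's theorem on integral points yields finitely many integral $(X,Y)$; alternatively, Baker's theory of linear forms in logarithms furnishes an effective bound for this Thue/superelliptic shape. Hence the whole conjecture collapses to producing the exponent bound $E$.

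To bound the exponents one passes to logarithms. Writing $ax^n=by^m+c$ with $x,y$ large, the linear form
\[
\Lambda=n\log x-m\log y+\log(a/b)
\]
is forced to be exponentially small, of size $O(y^{-m})$, whereas Baker's lower bound gives, up to the precise height dependence, $\log|\Lambda|\gg-\log\!\big(\max(m,n)\big)\cdot\log x\cdot\log y$. Comparing the two estimates yields $m\ll\log(\max(m,n))\,\log x$ and symmetrically for $n$, which caps the exponents whenever one of the bases $x,y$ stays bounded; that subcase is therefore settled.

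The main obstacle — and the reason the statement appears here only as a conjecture — is the regime in which $m,n$ and both bases $x,y$ grow simultaneously, where the effective Baker estimates are simply too weak to force a contradiction and no independent argument is known. The honest form of the plan is thus conditional: an $abc$-type inequality applied to the relation $by^m+c=ax^n$ immediately bounds $\max(m,n)$ in terms of $a,b,c$, after which the fixed-exponent finiteness above closes the argument, so Pillai's conjecture follows from the $abc$ conjecture. Unconditionally the problem remains open; the difficulty is underscored by the fact that even the single case $a=b=c=1$ (Catalan's equation $x^n-y^m=1$) required Mihailescu's theorem, and that the case $b=3$ of Conjecture \ref{conNara} is already equivalent to one instance of precisely this conjecture.
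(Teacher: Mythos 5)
This statement is not a theorem of the paper and the paper contains no proof of it: it is Pillai's conjecture, quoted verbatim as a known open problem in order to explain why the $b=3$ case of Conjecture \ref{conNara} is out of reach, and the authors say only that ``Pillai's conjecture is still open (it holds conditionally assuming the $abc$-conjecture).'' So there is no proof to compare against, and the most important feature of your proposal is that you correctly refused to claim one. Your structure --- finiteness for each fixed admissible exponent pair via Siegel's theorem (or effectively via Baker) applied to the superelliptic model $Z^n=a^{n-1}(bY^m+c)$, a Baker-type linear-forms estimate $\Lambda=n\log x-m\log y+\log(a/b)=O(y^{-m})$ handling the regime where a base stays bounded, and the identification of the simultaneous-growth regime as the genuine open obstruction --- is an accurate account of the state of the art, and your conditional closing step (an $abc$-type inequality applied to $by^m+c=ax^n$ bounds $\max(m,n)$, after which fixed-exponent finiteness finishes) is exactly the conditional resolution the paper alludes to in its parenthetical remark.

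Two small caveats on the details. First, as literally stated the conjecture needs the usual degeneracy exclusions beyond $m,n\geq 2$: one must also demand $x,y\geq 2$ (or discard trivial bases), since e.g.\ when $a-b=c$ the pair $(x,y)=(1,1)$ gives a solution for every $(m,n)$; your remark about exponents equal to $1$ is in the same spirit but does not cover this case. Second, in your Baker comparison the conclusion ``caps the exponents whenever one of the bases stays bounded'' needs a word more care when the bounded base is $1$, and when $x\geq 2$ is fixed the finiteness in $n$ is a separate (effectively solvable, but not automatic) polynomial--exponential step rather than an immediate consequence of the displayed inequality. Neither caveat affects your main point, which agrees with the paper: unconditionally the statement is open, and it follows from the $abc$ conjecture.
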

Pillai's conjecture is still open (it holds conditionally assuming the $abc$-conjecture).

In conclusion, other than some numerical evidence and the cases
covered in this work, Conjecture \ref{conNara} remains open.

\section*{Acknowledgements}

We thank Clemens Fuchs for bringing the reference \cite{gyory} to
our attention.

\end{document}